\documentclass[12pt]{article}
\usepackage{amssymb}
\usepackage{amsthm}
\usepackage{amsmath}
\usepackage{graphicx}
\usepackage{enumerate}
\usepackage{pict2e}
\usepackage{framed}
\DeclareMathOperator{\Op}{Op}

\DeclareMathOperator{\Con}{Con}

\DeclareMathOperator{\Max}{Max}

\newtheorem{theorem}{Theorem}[section]

\newtheorem{lemma}[theorem]{Lemma}
\newtheorem{proposition}[theorem]{Proposition}

\newtheorem{remark}[theorem]{Remark}
\newtheorem{example}[theorem]{Example}
\newtheorem{corollary}[theorem]{Corollary}
\title{A generalization of pseudocomplementation in semilattices, lattices and posets with $0$}
\author{Ivan~Chajda and Helmut~L\"anger$^2$}
\date{}

\begin{document}
	
\maketitle

\footnotetext[1]{Support of the research of the first author by the Czech Science Foundation (GA\v CR), project 25-20013L, and support of the research of the second author by the Austrian Science Fund (FWF), project 10.55776/PIN5424624, is gratefully acknowledged.}
\footnotetext[2]{Corresponding author}
	
\begin{abstract}
Pseudocomplemented lattices satisfy a lot of nice properties and were studied by a number of algebraists. However, even when a poset $\mathbf P=(P,\le,0)$ with $0$ is not pseudocomplemented, we can assign to every element $a$ of $P$ the set $a^p$ of all maximal elements being orthogonal to $a$. This assignment $^p$ is a non-trivial operator provided $\mathbf P$ satisfies the so-called Ascending Chain Condition (ACC). In particular, we investigate this operator when the poset $\mathbf P$ in question is a semilattice or a lattice. We characterize this operator and describe its properties that turn out to be similar to the corresponding properties of pseudocomplementation. Moreover, we introduce an equivalence relation $\Theta_p$ on $P$ induced by $^p$ similarly as it was done by V.~I.~Glivenko for distributive pseudocomplemented lattices. Finally, we investigate for which lattices $\mathbf L$ with $0$ satisfying the ACC this equivalence relation turns out to be a congruence and when the quotient lattice $\mathbf L/\Theta_p$ is complemented respectively modular.
\end{abstract}

{\bf AMS Subject Classification:} 06A11, 06D15, 06C15, 06C20

{\bf Keywords:} Pseudocomplement, generalizaed pseudocomplement, operator of generalized pseudocomplementation, Glivenko equivalence

\section{Introduction}

Let $\mathbf P=(P,\le,0)$ be a poset with $0$ and $a,b\in L$. Throughout this paper all posets, semilattices and lattices are assumed to be non-empty. We write $a\perp b$ if $a\wedge b$ exists and is equal to $0$ and put $a^\perp:=\{x\in P\mid x\perp a\}$. The element $b$ is called a {\em pseudocomplement} of $a$ (see e.g.\ \cite B) if it is the greatest element of $a^\perp$. Such an element $b$ is usually denoted by $a^*$. The {\em poset} $\mathbf P$ is called {\em pseudocomplemented} if every of its elements has a pseudocomplement. From \cite F it is well-known that every complete completely distributive lattice, and hence every finite distributive lattice, is pseudocomplemented. However, there exist lattices that are pseudocomplemented, but not even modular (see \cite{CR}), e.g.\ the lattice $\mathbf N_5$ in Fig.~1:
\begin{center}
\setlength{\unitlength}{7mm}
\begin{picture}(6,8)
\put(3,1){\circle*{.3}}
\put(1,4){\circle*{.3}}
\put(5,3){\circle*{.3}}
\put(5,5){\circle*{.3}}
\put(3,7){\circle*{.3}}
\put(3,1){\line(-2,3)2}
\put(3,1){\line(1,1)2}
\put(3,7){\line(-2,-3)2}
\put(3,7){\line(1,-1)2}
\put(5,3){\line(0,1)2}
\put(2.85,.3){$0$}
\put(.35,3.85){$b$}
\put(5.4,2.85){$a$}
\put(5.4,4.85){$c$}
\put(2.85,7.4){$1$}
\put(.4,-.45){{\rm Figure~1. Lattice $\mathbf N_5$}}
\end{picture}
\end{center}

\vspace*{3mm}

Here we have
\[
\begin{array}{l|c|c|c|c|c}
x   & 0 & a & b & c & 1 \\
\hline
x^* & 1 & b & c & b & 0
\end{array}
\]
For modular pseudocomplemented semilattices the reader is referred to \cite C. On the other hand, there are modular lattices with $0$ that are not pseudocomplemented, see Fig.~2:

\vspace*{-3mm}

\begin{center}
\setlength{\unitlength}{7mm}
\begin{picture}(6,8)
\put(3,1){\circle*{.3}}
\put(1,3){\circle*{.3}}
\put(3,3){\circle*{.3}}
\put(5,3){\circle*{.3}}
\put(3,5){\circle*{.3}}
\put(3,7){\circle*{.3}}
\put(3,1){\line(-1,1)2}
\put(3,1){\line(0,1)6}
\put(3,1){\line(1,1)2}
\put(3,5){\line(-1,-1)2}
\put(3,5){\line(1,-1)2}
\put(2.85,.3){$0$}
\put(.35,2.85){$a$}
\put(3.4,2.85){$b$}
\put(5.4,2.85){$c$}
\put(3.4,4.85){$d$}
\put(2.85,7.4){$1$}
\put(-3.7,-.75){{\rm Figure~2. Not pseudocomplemented modular lattice}}
\end{picture}
\end{center}

\vspace*{4mm}

In this lattice $a^*$ does not exist. This motivated us to generalize the notion of a pseudocomplement to arbitrary lattices, semilattices and posets with $0$ satisfying the Ascending Chain Condition (ACC, for short) and to arbitrary subsets of them. The mentioned condition means that there do not exist infinite ascending chains.

Let $\mathbf P=(P,\le,0)$ be a poset with $0$ and $A,B\subseteq L$. Then we define
\begin{align*}
& A\perp B\text{ if }x\perp y\text{ for all }x\in A\text{ and all }y\in B, \\
& A^\perp:=\{x\in P\mid x\perp A\}, \\
& \Max A:=\text{set of all maximal elements of A}, \\
& A^p:=\Max(A^\perp), \\
& A\le_1 B\text{ if for all }x\in A\text{ there exists some }y\in B\text{ with }x\le y, \\
& A=_1B\text{ if both }A\le_1B\text{ and }B\le_1A.
\end{align*}
By an {\em operator} on $P$ we mean a mapping from $P$ to $2^P$. Let $\Op(M)$ denote the set of all operators on $P$. In the following we identify singletons with their unique element. Of course, for an element $a$ of a pseudocomplemented poset with $0$ we have $a^*=a^p$. Thus the operator $^p$ is in fact a generalization of the operation of pseudocomplementation. The pair $(^\perp,^\perp)$ is the Galois connection between $(2^P,\subseteq)$ and $(2^P,\subseteq)$ induced by the binary relation $\perp$ on $P$. The mapping $^p$ is an operator on $P$, but $(A^p,\le)$ is an antichain of $\mathbf P$ for every subset $A$ of $P$. In order to guarantee that $A^p\ne\emptyset$ whenever $A\ne\emptyset$ we will consider only posets with $0$ satisfying the ACC.

For the lattice in Fig.~2 we have the following values of the operators $^p$ and $^{pp}$:
\[
\begin{array}{l|c|c|c|c|c|c}
x      & 0 &    a    &    b    &    c    & d & 1 \\
\hline
x^p    & 1 & \{b,c\} & \{a,c\} & \{a,b\} & 0 & 0 \\
\hline
x^{pp} & 0 &    a    &    b    &    c    & 1 & 1	
\end{array}
\]
The aim of this paper is to show similarities of the operator $^p$ with pseudocomplementation. The advantage of this approach is that this operator can be introduced in every lattice, semilattice or a poset with $0$ satisfying the ACC and hence it is really an extension of pseudocomplementation.

\section{Properties of the operator of generalized pseudocomplementation}

We start with two elementary technical lemmas.

\begin{lemma}\label{lem1}
Let $(P,\le,0)$ be a poset with $0$ satisfying the {\rm ACC} and $A\subseteq P$. Then the following hold:
\begin{enumerate}[{\rm(i)}]
\item $(\Max A)^\perp=A^\perp$,
\item $A^{pp}=\Max(A^{\perp\perp})$ and $A^{ppp}=\Max(A^{\perp\perp\perp})$.
\end{enumerate}
\end{lemma}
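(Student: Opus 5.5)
The key tool for (i) is that, under the ACC, every element of a set lies below a maximal element of that set. To see this, take $x\in A$. If $x$ is not maximal in $A$, choose $x<x_1\in A$. If $x_1$ is not maximal, choose $x_1<x_2\in A$, and so on. By the ACC this process must stop, so some $x_n\in\Max A$ satisfies $x\le x_n$. In the notation of the paper, $A=_1\Max A$.

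For (i), the inclusion $A^\perp\subseteq(\Max A)^\perp$ is immediate, because $\Max A\subseteq A$ and the operator $^\perp$ is antitone. For the converse, let $y\in(\Max A)^\perp$ and $x\in A$, and choose $m\in\Max A$ with $x\le m$. Then $m\wedge y$ exists and equals $0$. The delicate point is that in a poset $x\wedge y$ must be shown to exist. It does: $0$ is a lower bound of $x$ and $y$, and any lower bound $z$ of $x$ and $y$ satisfies $z\le x\le m$ and $z\le y$. Hence $z\le m\wedge y=0$, so $z=0$. Therefore $x\wedge y$ exists and equals $0$, which gives $y\in A^\perp$. The same argument shows more generally that $u\le v\perp y$ implies $u\perp y$, which is all that is used.

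For (ii), we have $A^{pp}=\Max\bigl((A^p)^\perp\bigr)=\Max\bigl((\Max(A^\perp))^\perp\bigr)$. Applying (i) to the set $A^\perp$ gives $(\Max(A^\perp))^\perp=A^{\perp\perp}$, so $A^{pp}=\Max(A^{\perp\perp})$. For the triple operator, $A^{ppp}=\Max\bigl((A^{pp})^\perp\bigr)=\Max\bigl((\Max(A^{\perp\perp}))^\perp\bigr)$. Applying (i) to the set $A^{\perp\perp}$ gives $(\Max(A^{\perp\perp}))^\perp=A^{\perp\perp\perp}$, so $A^{ppp}=\Max(A^{\perp\perp\perp})$.

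The only real obstacle is in (i): verifying that the meet $x\wedge y$ exists, since we are working in a poset rather than a lattice.
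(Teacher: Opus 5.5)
Your proof is correct and follows the paper's argument essentially step for step. For (i) you use that $^\perp$ is antitone together with the ACC (every element of $A$ lies below some element of $\Max A$), and for (ii) you apply (i) to $A^\perp$ and to $A^{\perp\perp}$. Your explicit check that $x\wedge y$ exists and equals $0$ in a mere poset is a worthwhile detail that the paper leaves implicit.
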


\begin{proof}
\begin{enumerate}[(i)]
\item Since $\Max A\subseteq A$ we have $A^\perp\subseteq(\Max A)^\perp$. Now let $a\in(\Max A)^\perp$ and suppose $b\in A$. Since $^p$ satisfies the ACC there exists some $c\in\Max A$ with $b\le c$. Because of $a\in(\Max A)^\perp$ we have $a\wedge c=0$ and therefore $a\wedge b=0$. Since $b$ was an arbitrary element of $A$ we conclude $a\in A^\perp$ and hence $(\Max A)^\perp\subseteq A^\perp$.	
\item Using (i) we obtain
\begin{align*}
 A^{pp} & =(A^p)^p=\Max\Big(\big(\Max(A^\perp)\big)^\perp\Big)=\Max\big((A^\perp)^\perp\big)=\Max(A^{\perp\perp}), \\
A^{ppp} & =(A^{pp})^p=\Max\Big(\big(\Max(A^{\perp\perp})\big)^\perp\Big)=\Max\big((A^{\perp\perp})^\perp\big)=\Max(A^{\perp\perp\perp}).
\end{align*}
\end{enumerate}
\end{proof}

Since we work in the sequel with antichains, the following simplification can be useful.

\begin{lemma}\label{lem2}
Let $\mathbf P=(P,\le)$ be a poset and $(A,\le),(B,\le)$ antichains of $\mathbf P$ and assume $A=_1B$. Then $A=B$.
\end{lemma}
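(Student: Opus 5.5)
By symmetry it suffices to prove $A\subseteq B$; the inclusion $B\subseteq A$ then follows by exchanging the roles of $A$ and $B$, since the hypothesis $A=_1B$ is symmetric.

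So let $a\in A$. From $A\le_1B$ we obtain some $b\in B$ with $a\le b$. From $B\le_1A$ we then obtain some $a'\in A$ with $b\le a'$. By transitivity $a\le a'$. Both $a$ and $a'$ belong to the antichain $(A,\le)$, and two comparable elements of an antichain must coincide, so $a=a'$.

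The chain $a\le b\le a'=a$ now gives $b=a$ by antisymmetry. Hence $a\in B$, which proves $A\subseteq B$.

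There is no real obstacle here. The only point needing care is that the antichain property is applied to $A$ itself, to the pair $a,a'$, and not across the two sets. Neither $0$ nor the ACC is needed, so the lemma holds for arbitrary posets, as stated.
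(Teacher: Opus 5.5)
Your proof is correct and is essentially the paper's argument: from $a\le b\le a'$ with $a,a'\in A$, the antichain property forces $a=a'$, hence $a=b\in B$, and symmetry gives $B\subseteq A$. You also spell out the antisymmetry step $a\le b\le a$, which the paper leaves implicit.
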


\begin{proof}
Assume $a\in A$. Because of $A\le_1B$ there exists some $b\in B$ with $a\le b$. Since $B\le_1A$ there exists some $c\in A$ with $b\le c$. Now $a,c\in A$ and $a\le b\le c$. Because $(A,\le)$ is an antichain of $\mathbf P$ we conclude $a=b\in B$. This shows $A\subseteq B$. The inclusion $B\subseteq A$ follows by symmetry.
\end{proof}

If $(P,\le,0)$ is a poset with $0$ and $A,B\subseteq S$ then we define
\[
A\wedge B=0\text{ if }x\wedge y=0\text{ for all }x\in A\text{ and all }y\in B.
\]
If $(S,\wedge)$ is a meet-semilattice and $A,B\subseteq S$ then we define
\[
A\wedge B:=\{x\wedge y\mid x\in A,y\in B\}.
\]

The basic properties of the operator $^p$ on a poset with $0$ are listed in the following proposition.

\begin{proposition}\label{prop2}
Let $\mathbf P=(P,\le,0)$ be a poset with $0$ satisfying the {\rm ACC} and $A,B\subseteq P$. Then
\begin{enumerate}[{\rm(i)}]
\item $(A^p,\le)$ is an antichain, $A\wedge A^p=0$ provided $A\ne\emptyset$, $\emptyset^p=0^p=\Max P$ and $\emptyset^{pp}=0^{pp}=0$,
\item $A\le_1A^{pp}$ and $A^{ppp}=A^p$,
\item $A\le_1B$ implies $B^p\le_1A^p$ and $A^{pp}\le_1B^{pp}$,
\item the following are equivalent: $A\le_1B^p$, $B\le_1A^p$ and $A\subseteq B^\perp$,
\item $A^{pp}=A$ if and only if there exists some $B\subseteq P$ with $A=B^p$,
\item $A^p\le_1B$ if and only if $A^\perp\le_1B$,
\item $A^p\le_1B^p$ if and only if $A^\perp\subseteq B^\perp$, and $A^p=B^p$ if and only if $A^\perp=B^\perp$.	
\end{enumerate}
\end{proposition}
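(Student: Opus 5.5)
The whole proof rests on two facts. First, $(^\perp,^\perp)$ is the Galois connection induced by the symmetric relation $\perp$. So $A\subseteq A^{\perp\perp}$, $A\subseteq B$ implies $B^\perp\subseteq A^\perp$, and $A^{\perp\perp\perp}=A^\perp$. Second, Lemma~\ref{lem1} lets us move between $A^p$ and $A^\perp$: $(A^p)^\perp=A^\perp$ by Lemma~\ref{lem1}(i), and $A^{pp}=\Max(A^{\perp\perp})$ by Lemma~\ref{lem1}(ii). The ACC guarantees that every element of a subset $C$ lies below some element of $\Max C$. Hence $C=_1\Max C$, so a set and its maximal part can be swapped freely with respect to $\le_1$. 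In addition, $\le_1$ is compatible with $^\perp$: if $A\le_1B$ then $B^\perp\subseteq A^\perp$, because $x\wedge b=0$ and $a\le b$ force $x\wedge a=0$. I will prove the items roughly in the order (i), (iv), (ii), (iii), (vi), (vii), (v), since later items feed on earlier ones.

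For (i), $A^p$ is an antichain because it is a set of maximal elements. $A\wedge A^p=0$ holds because $A^p\subseteq A^\perp$. Both $\emptyset^\perp$ and $0^\perp$ equal $P$, so $\emptyset^p=0^p=\Max P$. Then $(\Max P)^\perp=P^\perp=\{0\}$ by Lemma~\ref{lem1}(i), and $\{0\}$ is its own maximal set, so $\emptyset^{pp}=0^{pp}=0$. For (iv), note $B^p\subseteq B^\perp$ and $B^\perp=_1B^p$. So $A\le_1B^p$ gives $A\le_1B^\perp$, and since $B^\perp$ is a down-set this means $A\subseteq B^\perp$. Conversely, $A\subseteq B^\perp$ gives $A\le_1\Max(B^\perp)=B^p$. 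The relation $A\subseteq B^\perp$ is symmetric in $A,B$, which yields the middle equivalence.

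For (ii), apply (iv) to the trivial inclusion $A^p\subseteq A^\perp$ to get $A\le_1A^{pp}$. For $A^{ppp}=A^p$, Lemma~\ref{lem1}(ii) gives $A^{ppp}=\Max(A^{\perp\perp\perp})=\Max(A^\perp)=A^p$. For (iii), $A\le_1B$ gives $B^\perp\subseteq A^\perp$, and hence $B^p\le_1A^p$ by taking maximal elements above. Applying this twice yields $A^{pp}\le_1B^{pp}$.

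For (vi), both directions follow from $A^p=_1A^\perp$ and the transitivity of $\le_1$. For (vii), assume $A^p\le_1B^p$. By (vi) this gives $A^\perp\le_1B^p\subseteq B^\perp$, and since $B^\perp$ is a down-set, $A^\perp\subseteq B^\perp$. The converse is again the ACC maximal-element argument. The equality statement then follows from Lemma~\ref{lem2}, because $A^p,B^p$ are antichains. For (v), if $A=B^p$ then $A^{pp}=B^{ppp}=B^p=A$ by (ii); conversely, take $B=A^p$.

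The main obstacle is small. It is the recurring step "$A^\perp\le_1B^\perp$ implies $A^\perp\subseteq B^\perp$", which needs the observation that sets of the form $X^\perp$ are down-sets. Besides this, the ACC has to be invoked each time we pass from a set to its maximal elements; this is the only place it is used.
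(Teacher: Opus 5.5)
Your proof is correct and follows essentially the same route as the paper: the ACC to pass from a set to its maximal elements, the down-set property of sets $X^\perp$, Lemma~\ref{lem1}(ii) for $A^{ppp}=A^p$, and Lemma~\ref{lem2} for the equality part of (vii). The only difference is ordering: you prove (iv) first, getting its middle equivalence from the symmetry of $\perp$ and deriving $A\le_1A^{pp}$ from it, while the paper proves (ii) and (iii) first and uses them for (iv). One slip in your preliminaries is harmless because you never use it: Lemma~\ref{lem1}(i) gives $(A^p)^\perp=(\Max(A^\perp))^\perp=A^{\perp\perp}$, not $A^\perp$ (for $A=\{0\}$ the left side is $\{0\}$ while $A^\perp=P$).
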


\begin{proof}
\begin{enumerate}[(i)]
\item This follows from the definition of $A^p$.	
\item Since $(^\perp,^\perp)$ is a Galois connection between $(2^P,\subseteq)$ and $(2^P,\subseteq)$, we have $A\subseteq A^{\perp\perp}$. Due to the ACC and Lemma~\ref{lem1} (ii) we conclude $A\le_1\Max(A^{\perp\perp})=A^{pp}$. Again according to Lemma~\ref{lem1} (ii) we have $A^{ppp}=\Max(A^{\perp\perp\perp})=\Max(A^\perp)=A^p$.
\item Assume $A\le_1B$, $a\in B^p$ and $b\in A$. Since $A\le_1B$ there exists some $c\in B$ with $b\le c$. Because of $a\in B^p\subseteq B^\perp$ we have $a\wedge c=0$ and hence $a\wedge b=0$. Since $b$ was an arbitrary element of $A$, this shows $a\in A^\perp$. Because of the ACC there exists some $d\in A^p$ with $a\le d$. This shows $B^p\le_1A^p$. Now apply the implication just proved to $B^p\le_1A^p$.
\item If $A\le_1B^p$ then by (ii) and (iii) we get $B\le_1B^{pp}\le_1A^p$ and therefore $B\le_1A^p$. Now assume $b\in B\le_1A^p$. Then there exists some $a\in A^p$ with $b\le a$ and because of $A^p\subseteq A^\perp$ we have $a\in A^\perp$ and since $b\le a$ we conclude $b\in A^\perp$. This shows $B\subseteq A^\perp$ and therefore $A\subseteq B^\perp$. Finally, assume $A\subseteq B^\perp$. If $a\in A$ then $a\in B^\perp$ and because of the ACC there exists some $b\in B^p$ satisfying $a\le b$. This shows $A\le_1B^p$.
\item If $A^{pp}=A$ then $A=(A^p)^p$. If, conversely, there exists some $B\subseteq P$ with $A=B^p$ then $A^{pp}=B^{ppp}=B^p=A$ according to (ii).
\item Since $A^p\subseteq A^\perp$, $A^\perp\le_1B$ implies $A^p\le_1B$. Conversely, assume $A^p\le_1B$. Further, assume $a\in A^\perp$. Then according to the ACC there exists some $b\in A^p$ with $a\le b$. Because of $A^p\le_1B$ there exists some $c\in B$ with $b\le c$. This shows $a\le c$ and hence $A^\perp\le_1B$.
\item Because of (vi) and (v) the following are equivalent:
\[
A^p\le_1B^p,\enspace A^\perp\le_1B^p,\enspace A^\perp\subseteq B^\perp.
\]
\item From this we obtain that $A^p=_1B^p$ is equivalent to $A^\perp=B^\perp$. Now (viii) follows from Lemma~\ref{lem2}.
\end{enumerate}		
\end{proof}

It is worth noticing that inequality in (ii) of Proposition~\ref{prop2} cannot be replaced by the corresponding equality. Namely, in the lattice from Fig.~2 we have $d^{pp}=1\ne d$.

\begin{corollary}\label{cor1}
Let $\mathbf P=(P,\le,0)$ be a poset with $0$ satisfying the {\rm ACC} and $a,b\in P$. Then
\begin{enumerate}[{\rm(i)}]
\item $(a^p,\le)$ is an antichain and $a\wedge a^p=0$,
\item $a\le_1a^{pp}$ and $a^{ppp}=a^p$,
\item $a\le_1b$ implies $b^p\le_1a^p$ and $a^{pp}\le_1b^{pp}$,
\item the following are equivalent: $a\le_1b^p$, $b\le_1a^p$ and $a\subseteq b^\perp$,
\item $a^p\le_1b$ if and only if $a^\perp\le_1b$,
\item $a^p\le_1b^p$ if and only if $a^\perp\subseteq b^\perp$, and $a^p=b^p$ if and only if $a^\perp=b^\perp$.	
\end{enumerate}
\end{corollary}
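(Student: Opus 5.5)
This corollary is the special case of Proposition~\ref{prop2} for singletons. Using the paper's convention of identifying an element with the singleton containing it, I would put $A:=\{a\}$ and $B:=\{b\}$ and read off each item. Then $a^p=\{a\}^p$, $a^\perp=\{a\}^\perp$, and $a\le_1 b$ means exactly $\{a\}\le_1\{b\}$, i.e.\ $a\le b$. So (i)--(iv) come directly from items (i)--(iv) of Proposition~\ref{prop2}. Items (v) and (vi) come from items (vi) and (vii) there; the proposition's item (v) on $A^{pp}=A$ is simply not carried over.

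In (i), the proposition's hypothesis $A\ne\emptyset$ holds because $\{a\}\ne\emptyset$, so $a\wedge a^p=0$ follows. In (iv), the condition $a\subseteq b^\perp$ is to be read as $\{a\}\subseteq b^\perp$, i.e.\ $a\perp b$. This reading matches the proposition's $A\subseteq B^\perp$ verbatim.

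The only point needing any care is item (vi) of Proposition~\ref{prop2}: $a^p\le_1 b$ must mean $a^p\le_1\{b\}$, i.e.\ every element of $a^p$ lies below $b$. In the proposition the set $B$ is arbitrary, so this substitution is legitimate. The second half of (vi) of the corollary is the equality statement in item (vii) of the proposition. As the proposition's own proof shows, that equality rests on Lemma~\ref{lem2} applied to the antichains $a^p$ and $b^p$.

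There is no genuine obstacle. The proof consists of citing Proposition~\ref{prop2} with $A=\{a\}$ and $B=\{b\}$ and checking that the notation matches under the singleton identification.
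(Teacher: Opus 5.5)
Your proposal is correct and takes the same approach as the paper, which gives no separate proof because the corollary is Proposition~\ref{prop2} specialized to $A=\{a\}$ and $B=\{b\}$, with items (v) and (vi) coming from items (vi) and (vii) of the proposition exactly as you say. Your readings of $a\subseteq b^\perp$ as $\{a\}\subseteq b^\perp$ (i.e.\ $a\perp b$) and of $a^p\le_1 b$ as $a^p\le_1\{b\}$ are the intended ones.
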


\begin{remark}
Let $\mathbf P=(P,\le,0)$ be a poset with $0$ satisfying the {\rm ACC} and $a,b\in P$ and assume $a\le b$ and $a^p=0$. Then according to {\rm(iii)} of Corollary~\ref{cor1} we conclude $b^p\le_1a^p=0$ and hence $b^p=0$. This shows that $\{x\in P\mid x^p=0\}$ is an order filter of $\mathbf P$.
\end{remark}

If the poset in question is a meet-semilattice, we can prove the following.

\begin{lemma}\label{lem4}
Let $(S,\wedge,0)$ be a meet-semilattice with $0$ satisfying the {\rm ACC} and $A,B\subseteq S$ then the following hold:
\begin{enumerate}[{\rm(i)}]
\item If $A,B\ne\emptyset$ then $A\wedge(A\wedge B)^p=_1A\wedge B^p$,
\item $(A\wedge B)^{pp}\le_1A^{pp}\wedge B^{pp}$.
\end{enumerate}
\end{lemma}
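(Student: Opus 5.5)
For (i) I would prove the two halves of $=_1$ separately. Each half is a direct element-wise argument: it uses only associativity and commutativity of $\wedge$, the fact that $x\le y$ and $y\wedge z=0$ imply $x\wedge z=0$, and the ACC in the form already used in the proof of Proposition~\ref{prop2} (iii), (iv). That form says: if $z\in C^\perp$, then there is some $w\in C^p$ with $z\le w$.

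For $A\wedge(A\wedge B)^p\le_1A\wedge B^p$, I take $a\in A$ and $x\in(A\wedge B)^p$. For every $b\in B$ we have $(a\wedge x)\wedge b=x\wedge(a\wedge b)=0$, because $a\wedge b\in A\wedge B$ and $x\in(A\wedge B)^\perp$. Hence $a\wedge x\in B^\perp$. The ACC then gives some $y\in B^p$ with $a\wedge x\le y$, so $a\wedge x\le a\wedge y\in A\wedge B^p$.

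For $A\wedge B^p\le_1A\wedge(A\wedge B)^p$, I take $a\in A$ and $y\in B^p$. For every $a'\in A$ and $b\in B$ we get $(a\wedge y)\wedge(a'\wedge b)\le y\wedge b=0$. Hence $a\wedge y\in(A\wedge B)^\perp$. The ACC yields some $x\in(A\wedge B)^p$ with $a\wedge y\le x$, and therefore $a\wedge y\le a\wedge x\in A\wedge(A\wedge B)^p$. The assumption $A,B\neq\emptyset$ is only needed so that the sets involved are meaningful. The argument itself is uniform, so I expect no real obstacle here.

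For (ii) I would use the monotonicity in Proposition~\ref{prop2} (iii). Every element $a\wedge b$ of $A\wedge B$ lies below $a\in A$ and below $b\in B$, so $A\wedge B\le_1A$ and $A\wedge B\le_1B$. Proposition~\ref{prop2} (iii) then gives $(A\wedge B)^{pp}\le_1A^{pp}$ and $(A\wedge B)^{pp}\le_1B^{pp}$. So for $z\in(A\wedge B)^{pp}$ there are $u\in A^{pp}$ and $v\in B^{pp}$ with $z\le u$ and $z\le v$, hence $z\le u\wedge v\in A^{pp}\wedge B^{pp}$.

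The only point needing care, and the main obstacle I expect, is the degenerate case where $A$ or $B$ is empty. Say $A=\emptyset$. Then $A\wedge B=\emptyset$, and by Proposition~\ref{prop2} (i) we have $(A\wedge B)^{pp}=\emptyset^{pp}=0$. We then need $A^{pp}\wedge B^{pp}$ to be nonempty, and it suffices that both factors are nonempty. This holds because every $C^p=\Max(C^\perp)$ is nonempty by the ACC, since $C^\perp$ contains $0$. So $0$ lies below some element of $A^{pp}\wedge B^{pp}$, and the inequality holds in this case as well.
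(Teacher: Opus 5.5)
Your proof is correct and is essentially the paper's: for (i) the paper obtains $A\wedge B^p\le_1A\wedge(A\wedge B)^p$ by applying Proposition~\ref{prop2}~(iii) to $A\wedge B\le_1B$, which your element-wise argument simply unpacks, and your other half of (i) and your proof of (ii) coincide with the paper's. Your separate treatment of empty $A$ or $B$ in (ii) is correct but redundant, because $A\wedge B\le_1A$ and $A\wedge B\le_1B$ then hold vacuously and Proposition~\ref{prop2}~(iii) applies to arbitrary subsets; likewise, your argument for (i) never actually uses $A,B\ne\emptyset$.
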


\begin{proof}
\begin{enumerate}[(i)]
\item Assume $A,B\ne\emptyset$. Since $A\wedge B\le_1B$ we have $B^p\le_1(A\wedge B)^p$ according to (iii) of Proposition~\ref{prop2} which implies $A\wedge B^p\le_1A\wedge(A\wedge B)^p$. On the other hand, $A\wedge B\wedge(A\wedge B)^p=0$ according to (vi) of Proposition~\ref{prop2} whence $A\wedge(A\wedge B)^p\subseteq B^\perp$ and hence $A\wedge(A\wedge B)^p\le_1B^p$. Therefore $A\wedge(A\wedge B)^p\le_1A\wedge B^p$. Together we obtain $A\wedge(A\wedge B)^p=_1A\wedge B^p$.
\item Using Proposition~\ref{prop2} one sees that any of the following statements implies the next one:
\begin{align*}
& A\wedge B\le_1A\text{ and }A\wedge B\le_1B, \\
& (A\wedge B)^{pp}\le_1A^{pp}\text{ and }(A\wedge B)^{pp}\le_1B^{pp}, \\
& (A\wedge B)^{pp}\le_1A^{pp}\wedge B^{pp}.
\end{align*}
\end{enumerate}
\end{proof}

That in (ii) of Lemma~\ref{lem4} ``$\le_1$'' cannot be replaced by ``$=$'' can be seen as follows: In the lattice from Fig.~2 we have
\begin{align*}
(\{a,b\}\wedge\{a,c\})^{pp} & =\{0,a\}^{pp}=\{b,c\}^p=a\ne\{0,a\}=\{a,b\}\wedge\{a,c\}=c^p\wedge b^p= \\
                            & =\{a,b\}^{pp}\wedge\{a,c\}^{pp}.
\end{align*}
If $(S,\vee)$ is a join-semilattice and $A,B\subseteq S$ then we define
\[
A\vee B:=\{x\vee y\mid x\in A,y\in B\}.
\]
If the poset is even a lattice, we can prove a result similar to that of Lemma~\ref{lem4}, but for the operation $\vee$ instead of the operation $\wedge$.

\begin{lemma}\label{lem5}
Let $(L,\vee,\wedge,0)$ be a lattice with $0$ satisfying the {\rm ACC} and $\emptyset\ne A,B\subseteq L$. Then $(A\vee B)^p\le_1A^p\wedge B^p$.
\end{lemma}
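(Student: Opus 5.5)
Since $A\le_1 A\vee B$ and $B\le_1 A\vee B$ (every $a\in A$ lies below $a\vee b$ for any $b\in B$, and $B\ne\emptyset$; symmetrically for $B$ using $A\ne\emptyset$), I would apply (iii) of Proposition~\ref{prop2} to both relations. This gives $(A\vee B)^p\le_1A^p$ and $(A\vee B)^p\le_1B^p$.

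It then remains to combine these two relations into $(A\vee B)^p\le_1A^p\wedge B^p$. The step of passing from ``$X\le_1 Y$ and $X\le_1 Z$'' to ``$X\le_1 Y\wedge Z$'' was used without comment in (ii) of Lemma~\ref{lem4}, but it deserves a check here. Take $x\in(A\vee B)^p$. Then there are $y\in A^p$ and $z\in B^p$ with $x\le y$ and $x\le z$. Hence $x\le y\wedge z$, and $y\wedge z\in A^p\wedge B^p$, which is exactly the required element. So the combination is valid pointwise, and no antichain or ACC argument is needed beyond what (iii) of Proposition~\ref{prop2} already uses.

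The only point needing care is nonemptiness. If $A$ or $B$ were empty, then $A\vee B=\emptyset$, so $(A\vee B)^p=\Max L$ by (i) of Proposition~\ref{prop2}. Meanwhile $A^p\wedge B^p$ would not dominate $\Max L$ in general, and $A\le_1A\vee B$ would fail. The hypothesis $\emptyset\ne A,B$ is exactly what makes $A\le_1A\vee B$ and $B\le_1A\vee B$ hold, so there is no real obstacle beyond this bookkeeping.
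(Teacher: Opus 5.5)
Your proof is correct and is the paper's argument: nonemptiness gives $A\le_1A\vee B$ and $B\le_1A\vee B$, (iii) of Proposition~\ref{prop2} then gives $(A\vee B)^p\le_1A^p$ and $(A\vee B)^p\le_1B^p$, and these combine into $(A\vee B)^p\le_1A^p\wedge B^p$. Your pointwise check of the combining step, which the paper leaves implicit, is a welcome addition.
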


\begin{proof}
Using Proposition~\ref{prop2} one sees that any of the following statements implies the next one:
\begin{align*}
& A\le_1A\vee B\text{ and }B\le_1A\vee B, \\
& (A\vee B)^p\le_1A^p\text{ and }(A\vee B)^p\le_1B^p, \\
& (A\vee B)^p\le_1A^p\wedge B^p.
\end{align*}	
\end{proof}

It is worth noticing that the inequality in Lemma~\ref{lem5} cannot be replaced by the corresponding equality. Namely, in the lattice from Fig.~2 we have:
\[
(a\vee b)^p=d^p=0\ne\{0,c\}=\{b,c\}\wedge\{a,c\}=a^p\wedge b^p.
\]

\section{A characterization of the operator $^p$ in semilattices and posets}

Pseudocomplemented semilattices were investigated in \cite J, see also \cite{CHK}.

We can characterize the operator $^p$ on bounded meet-semilattices satisfying the ACC as follows:

\begin{theorem}
Let $\mathbf S=(S,\wedge,0,1)$ be a bounded meet-semilattice satisfying the {\rm ACC} and $^p\colon S\to2^S$ an operator on $S$. Then $^p$ is the operator-pseudocomplementation on $\mathbf S$ if and only if the following conditions hold for all $x,y\in S$:
\begin{enumerate}[{\rm(i)}]
\item $(x^p,\le)$ is an antichain,
\item $x\wedge0^p=x$,
\item $x\wedge x^p=0$,
\item $x\wedge(x\wedge y)^p=_1x\wedge y^p$.
\end{enumerate}
\end{theorem}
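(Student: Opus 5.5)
The plan is to prove the two directions separately. Throughout, $x\wedge X$ for $X\subseteq S$ denotes the set $\{x\wedge z\mid z\in X\}$. Equalities such as $x\wedge0^p=x$ and $x\wedge x^p=0$ are read as equalities of sets with singletons, following the paper's identification of singletons with their elements. In particular they force $0^p\ne\emptyset$ and $x^p\ne\emptyset$.

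For the ``only if'' direction, assume $^p$ is the operator-pseudocomplementation, i.e.\ $x^p=\Max(x^\perp)$.
\begin{enumerate}[(a)]
\item Conditions (i) and (iii) are exactly Corollary~\ref{cor1}~(i).
\item For (ii), Proposition~\ref{prop2}~(i) gives $0^p=\Max S$. Since $\mathbf S$ has a top element, $\Max S=\{1\}$, and therefore $x\wedge0^p=x\wedge1=x$.
\item Condition (iv) is Lemma~\ref{lem4}~(i) applied to $A=\{x\}$ and $B=\{y\}$, which are nonempty.
\end{enumerate}

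For the ``if'' direction, assume an operator $^p$ satisfies (i)--(iv). The aim is to show $x^p=\Max(x^\perp)$ for every $x$, in four steps.
\begin{enumerate}[(a)]
\item First I determine $0^p$. Condition (ii) says $\{x\wedge z\mid z\in0^p\}=\{x\}$ for all $x$. Hence $0^p\neq\emptyset$, and every $z\in0^p$ satisfies $z\ge x$ for all $x\in S$. So $z=1$, and $0^p=\{1\}$.
\item Next I show $x^p\subseteq x^\perp$. Condition (iii) gives $x\wedge z=0$ for every $z\in x^p$.
\item The key step is to show $x^\perp\le_1x^p$. Take $z\in x^\perp$ and apply (iv) with the roles swapped, namely to the pair $(z,x)$: $z\wedge(z\wedge x)^p=_1z\wedge x^p$. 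Since $z\wedge x=0$, step (a) turns the left side into $z\wedge1=z$. So $z\le_1 z\wedge x^p$, i.e.\ there is $w\in x^p$ with $z\le z\wedge w$, hence $z\le w$.
\item Finally I conclude $x^p=\Max(x^\perp)$ using the antichain condition (i).
\begin{itemize}
\item Let $w\in x^p$ and $u\in x^\perp$ with $w\le u$. By step (c) there is $w'\in x^p$ with $u\le w'$. Since $w\le w'$ inside the antichain $x^p$, we get $w=w'=u$, so $w$ is maximal in $x^\perp$.
\item Let $m\in\Max(x^\perp)$. Step (c) gives $m\le w$ for some $w\in x^p\subseteq x^\perp$, and maximality forces $m=w\in x^p$.
\end{itemize}
\end{enumerate}

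I expect the main obstacle to be step (c): seeing that (iv) should be applied with the orthogonal element $z$ in the first slot, so that $(z\wedge x)^p=0^p=\{1\}$ collapses the left side. A secondary point of care is the set-versus-element reading of (ii) and (iii), which is what supplies nonemptiness of $0^p$ and $x^p$. The ACC is not needed in the converse direction; it enters only through the cited results in the forward direction.
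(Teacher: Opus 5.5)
Your proof is correct and follows essentially the paper's route: you get $0^p=\{1\}$ from (ii), apply (iv) to an element $z\in x^\perp$ in the first slot so that $(z\wedge x)^p=0^p$ collapses the left side to $z$, and then use the antichain condition (i) for both inclusions of $x^p=\Max(x^\perp)$. The differences are cosmetic. You read off $x^p\subseteq x^\perp$ directly from (iii), whereas the paper first proves the full equivalence $a\in b^\perp\Leftrightarrow a\le_1b^p$. For (iv) in the forward direction you cite Lemma~\ref{lem4}~(i), which is the correct source; the paper's proof cites Corollary~\ref{cor1}, which does not contain that identity.
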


\begin{proof}
Let $a,b\in S$. (i), (iii) and (iv) hold according to Corollary~\ref{cor1}, and (ii) holds since $0^p=1$. Conversely, assume (i) -- (iv) to hold. From (ii) we obtain $0^p=1$. Assume $a\in b^\perp$. Then $a\wedge b=0$ and hence $a=_1a\wedge b^p$ according to (iv) and (ii). This shows $a\le_1b^p$. Conversely, assume $a\le_1b^p$. Then $a=_1a\wedge b^p$ and hence
\[
a\wedge b=_1(a\wedge b^p)\wedge b=a\wedge(b^p\wedge b)=a\wedge0=0
\]
according to (iii). But this implies $a\wedge b=0$, i.e.\ $a\in b^\perp$. Hence $a\in b^\perp$ is equivalent to $a\le_1b^p$. If $a\in b^p$ then $a\le_1b^p$ and hence $a\in b^\perp$. This shows $b^p\subseteq b^\perp$. Now assume $a\in b^p$, $c\in b^\perp$ and $a\le c$. Since $c\in b^\perp$ we have $c\le_1b^p$, i.e.\ there exists some $d\in b^p$ with $c\le d$. Because of $a\le c\le d$ we have $a\le d$. Now (i) implies $a=d$. Therefore $a=c$. This shows $b^p\subseteq\Max(b^\perp)$. Conversely, assume $a\in\Max(b^\perp)$. Then $a\in b^\perp$ and hence $a\le_1b^p$, i.e.\ there exists some $c\in b^p$ with $a\le c$. Now $a\le c\in b^\perp$. Since $a\in\Max(b^\perp)$ we conclude $a=c\in b^p$ showing $\Max(b^\perp)\subseteq b^p$. Therefore $b^p=\Max(b^\perp)$ and $^p$ is the operator pseudocomplementation on $\mathbf S$.
\end{proof}

In the next proposition we consider the question when $^p$ is an involution.

\begin{proposition}\label{prop1}
Let $(P,\le,0)$ be a poset with $0$ satisfying the {\rm ACC} and $a\in P$. Then the following hold:
\begin{enumerate}[{\rm(i)}]
\item If there exists some $b\in P$ satisfying $a=b^p$ then $a^{pp}=a$,
\item if there exists some $b\in P$ satisfying $b\not\le a$ and $a^p\le_1b^p$ then $a^{pp}\ne a$,
\item if there exists some $b\in P$ satisfying $a<b$ and $a^p=b^p$ then $a^{pp}\ne a$.
\end{enumerate}
\end{proposition}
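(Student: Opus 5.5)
The plan is to derive all three parts directly from Proposition~\ref{prop2} and Corollary~\ref{cor1}, identifying the element $a$ with the singleton $\{a\}$ as the paper does.

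For (i), suppose $a=b^p$ for some $b\in P$. Then Corollary~\ref{cor1}~(ii) gives $a^{pp}=b^{ppp}=b^p=a$. Equivalently, this is the special case of Proposition~\ref{prop2}~(v) with $A=\{a\}$ and $B=\{b\}$. No further work is needed.

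For (ii), I will argue by contradiction. Assume $b\not\le a$, $a^p\le_1b^p$ and, contrary to the claim, $a^{pp}=a$.
\begin{enumerate}[(1)]
\item By Corollary~\ref{cor1}~(vi), $a^p\le_1b^p$ is equivalent to $a^\perp\subseteq b^\perp$.
\item By Corollary~\ref{cor1}~(ii), $b\le_1b^{pp}$, so there is some $c\in b^{pp}$ with $b\le c$.
\item Since $b^{pp}=\Max(b^{\perp\perp})$ by Lemma~\ref{lem1}~(ii), we have $c\in b^{\perp\perp}$.
\item From (1) we get $b^{\perp\perp}\subseteq a^{\perp\perp}$, because the Galois map $^\perp$ is antitone. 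Hence $c\in a^{\perp\perp}$.
\item By the ACC and Lemma~\ref{lem1}~(ii), $c$ lies below some element of $\Max(a^{\perp\perp})=a^{pp}=\{a\}$, so $c\le a$.
\item Then $b\le c\le a$, contradicting $b\not\le a$.
\end{enumerate}
A slicker route to the same contradiction uses Proposition~\ref{prop2}~(iii): from $a^p\le_1b^p$ it gives $b^{pp}\le_1a^{pp}=a$, and $b\le_1b^{pp}$ then yields $b\le a$. The only point needing care is that $\le_1$ chains transitively, which is immediate from its definition. I expect no real obstacle here; the main care is in handling singletons versus sets correctly.

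For (iii), take $b$ with $a<b$ and $a^p=b^p$. Then $b\not\le a$, since otherwise $a=b$. Also $a^p\le_1b^p$ holds trivially because the two sets are equal. So (iii) follows immediately from (ii).
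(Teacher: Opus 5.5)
Your proposal is correct and follows the paper's proof. Parts (i) and (iii) are argued exactly as in the paper, and your ``slicker route'' for (ii), $b\le_1 b^{pp}\le_1 a^{pp}=a$ via Proposition~\ref{prop2}~(iii), is precisely the paper's argument; your six-step version only unpacks the same inequality through $a^\perp\subseteq b^\perp$ and the antitonicity of $^\perp$. Citing Proposition~\ref{prop2}~(iii) rather than Corollary~\ref{cor1} is in fact the more accurate reference, since $a^p$ and $b^p$ need not be singletons.
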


\begin{proof}
\begin{enumerate}[(i)]
\item If there exists some $b\in P$ satisfying $a=b^p$ then $a^{pp}=b^{ppp}=b^p=a$ according to Corollary~\ref{cor1}.
\item If there exists some $b\in P$ satisfying $b\not\le a$ and $a^p\le_1b^p$ then $a^{pp}=a$ would imply $b\le_1b^{pp}\le_1a^{pp}=a$ according to Corollary~\ref{cor1} and hence $b\le_1a$, i.e.\ $b\le a$ contradicting $b\not\le a$.
\item If there exists some $b\in P$ satisfying $a<b$ and $a^p=b^p$ then $a^{pp}\ne a$ according to (ii).
\end{enumerate}
\end{proof}

In the lattice from Fig.~2 we have $d<1$, $d^p=0=1^p$ and $d^{pp}=1\ne d$ in accordance with (iii) of Proposition~\ref{prop1}.

The following auxiliary result is expected but not trivial.

\begin{lemma}
Let $\mathbf S=(S,\vee)$ be a join-semilattice satisfying the {\rm ACC}. Then $\mathbf S$ has a greatest element.	
\end{lemma}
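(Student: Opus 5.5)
We argue by contradiction, building an infinite strictly ascending chain; since $\mathbf S$ is non-empty (the standing convention of the paper) we may start at some $a_0\in S$.

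Suppose $\mathbf S$ has no greatest element. Then for every $x\in S$ there is some $y\in S$ with $y\not\le x$, for otherwise $x$ would be the greatest element. Put $x':=x\vee y$. Then $x\le x'$, and $x'\ne x$, since $x'=x$ would give $y\le x$. Hence $x<x'$. Starting from $a_0$ and choosing $a_{n+1}:=a_n\vee y_n$ with $y_n\not\le a_n$, we obtain $a_0<a_1<a_2<\cdots$, an infinite ascending chain. This contradicts the ACC, so $\mathbf S$ has a greatest element.

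Equivalently, one can phrase the argument through maximal elements. By the ACC, $S$ has a maximal element $m$: otherwise the same recursive choice of strictly larger elements would produce an infinite chain. For any $y\in S$ we have $m\le m\vee y$, so maximality forces $m\vee y=m$, i.e.\ $y\le m$. Thus $m$ is the greatest element. This version is cleaner, because the existence of a maximal element under the ACC is exactly the fact already used throughout the paper, for instance in Lemma~\ref{lem1}.

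The only delicate point is this existence of a maximal element, or of the infinite chain, which needs the usual recursive (dependent) choice. I would cite it as the standard consequence of the ACC used earlier and not reprove it. The semilattice structure enters only in the step $m\vee y=m$, and this step is what distinguishes the situation from a general poset, where several maximal elements can coexist.
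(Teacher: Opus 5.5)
Your proof is correct, and your second version is essentially the paper's argument. The paper also starts from $\Max S\ne\emptyset$, but it uses $a,b\le a\vee b$ to show the maximal element is unique and then appeals to the ACC a second time to place every $d$ below it. Your step $m\le m\vee y\Rightarrow m\vee y=m\Rightarrow y\le m$ gives greatestness directly, skipping both the uniqueness step and the second use of the ACC; your chain-building version is the same fact with the dependent-choice step made explicit.
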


\begin{proof}
Since $S\ne\emptyset$ and $\mathbf S$ satisfies the ACC we have $\Max S\ne\emptyset$. If $a,b\in\Max S$ then because of $a,b\le a\vee b$ we conclude $a=a\vee b=b$. Thus $|\Max S|=1$, say $\Max S=\{c\}$. Let $d\in S$. Then because of the ACC there exists some $e\in\Max S$ with $d\le e$. Since $\Max S=\{c\}$ we conclude $d\le e=c$. This shows that $c$ is the greatest element of $\mathbf S$.	
\end{proof}

The operator $^p$ on a poset with $0$ satisfying the ACC can be characterized as follows.

\begin{theorem}\label{th2}
Let $\mathbf P=(P,\le,0)$ be a poset with $0$ satisfying the {\rm ACC} and $^p\colon P\to2^P$ an operator on $P$. Then $^p$ is the operator pseudocomplementation on $\mathbf P$ if and only if the following conditions 
hold for all $x,y\in P$:	
\begin{enumerate}[{\rm(i)}]
\item $(x^p,\le)$ is an antichain,
\item $x\wedge y=0$ if and only if there exists some $z\in y^p$ with $x\le z$.
\end{enumerate}
\end{theorem}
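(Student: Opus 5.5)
The plan is to prove the two directions separately. The converse direction mirrors the last part of the proof of the preceding semilattice characterization, with condition (ii) of Theorem~\ref{th2} taking the place of the derived equivalence ``$a\in b^\perp$ if and only if $a\le_1b^p$''.

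Suppose first that $^p$ is the operator pseudocomplementation, i.e.\ $y^p=\Max(y^\perp)$. Then (i) is part of Corollary~\ref{cor1}~(i). For (ii), assume $x\wedge y=0$, so $x\in y^\perp$. By the ACC, applied to the nonempty set $\{u\in y^\perp\mid x\le u\}$, there is some $z\in\Max(y^\perp)=y^p$ with $x\le z$. Conversely, assume $x\le z$ for some $z\in y^p$. Then $z\wedge y=0$. Any common lower bound of $x$ and $y$ is then a common lower bound of $z$ and $y$, hence equals $0$. Therefore $x\wedge y$ exists and equals $0$. This last point, that the meet actually exists, is the only place where the poset setting needs care.

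Now assume (i) and (ii) hold, and fix $b\in P$. Condition (ii) says exactly that $a\in b^\perp$ if and only if $a\le_1b^p$ (reading ``$x\wedge y=0$'' with the symmetry of $\perp$). First, every $a\in b^p$ satisfies $a\le_1b^p$ trivially, so $a\in b^\perp$; hence $b^p\subseteq b^\perp$.

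Next we show $b^p\subseteq\Max(b^\perp)$. Take $a\in b^p$ and $c\in b^\perp$ with $a\le c$. By (ii) there is some $d\in b^p$ with $c\le d$. Then $a\le d$ with $a,d\in b^p$, so (i) forces $a=d$, and therefore $a=c$. Finally, for the reverse inclusion take $a\in\Max(b^\perp)$. By (ii) there is some $c\in b^p$ with $a\le c$. Since $c\in b^\perp$ and $a$ is maximal in $b^\perp$, we get $a=c\in b^p$. Hence $b^p=\Max(b^\perp)$ for every $b$, so $^p$ is the operator pseudocomplementation.

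No step is a real obstacle. The only subtlety is making sure that ``$x\wedge y=0$'' in (ii) is read as ``the meet exists and is $0$'', as the paper defines $\perp$. It must also be read symmetrically, so that it matches $x\in y^\perp$.
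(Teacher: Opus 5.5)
Your proposal is correct and follows the paper's proof: the converse direction is the same three-step argument ($b^p\subseteq b^\perp$, then $b^p\subseteq\Max(b^\perp)$ via the antichain condition (i), then $\Max(b^\perp)\subseteq b^p$). For the forward direction you verify (ii) directly, whereas the paper cites Proposition~\ref{prop2} (the item asserting that $A\le_1B^p$ is equivalent to $A\subseteq B^\perp$, mislabelled ``(x)'' in the paper). Your explicit check that the meet $x\wedge y$ actually exists is a step the paper leaves implicit.
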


Observe that condition {\rm(ii)} can be written in the following way:
\begin{enumerate}
\item[(ii')] $x\in y^\perp$ if and only if $x\le_1x^p$.
\end{enumerate}	

\begin{proof}[Proof of Theorem~\ref{th2}]
If $^p$ is the operator pseudocomplementation on $\mathbf P$ then condition (i) holds because of Corollary~\ref{cor1} and condition (ii) holds because of (x) of Proposition~\ref{prop2}. Conversely, assume (i) and (ii) to hold. Let $a\in P$. Further assume $b\in a^p$. Then $b\le_1a^p$ and hence $b\in a^\perp$ by (ii). This shows $a^p\subseteq a^\perp$. Now assume $c\in a^\perp$ and $c\ge b$. Because of (ii) we have $c\le_1a^p$ and therefore there exists some $d\in a^p$ with $c\le d$. Now we have $b\le c\le d$. Since $b,d\in a^p$ we conclude by (i) that $b=d$ and hence $c=b$. This shows $b\in\Max(a^\perp)$. We have proved $a^p\subseteq\Max(a^\perp)$. Now assume $e\in\Max(a^\perp)$. Then $e\in a^\perp$ and hence by (ii) we conclude $e\le_1a^p$. This shows that there exists some $f\in a^p$ with $e\le f$. Now $f\le_1a^p$ which according to (ii) implies $f\in a^\perp$. Together with $e\in\Max(a^\perp)$ and $e\le f$ this yields $e=f\in a^p$ showing $\Max(a^\perp)\subseteq a^p$. Altogether we have proved $a^p=\Max(a^\perp)$ completing the proof of the theorem.
\end{proof}

A {\em lattice} $(L,\vee,\wedge,0)$ {\em with $0$} is called {\em $0$-distributive} if
\[
x,y,z\in L\text{ and }x\wedge y=x\wedge z=0\text{ together imply }x\wedge(y\vee z)=0.
\]
A {\em complete lattice} $(L,\vee,\wedge)$ is called {\em completely $0$-distributive} if
\[
x\in L,\emptyset\ne A\subseteq L\text{ and }x\wedge y=0\text{ for all }y\in A\text{ together imply }x\wedge\bigvee A=0.
\]

\begin{remark}
It is well-known that every completely $0$-distributive complete lattice is pseudocomplemented. This is also true for $0$-distributive lattices with $0$ satisfying the {\rm ACC} which can be seen as follows: Let $\mathbf L=(L,\vee,\wedge,0)$ be a $0$-distributive lattice with $0$ satisfying the {\rm ACC} and $a\in L$. According to the {\rm ACC}, $a^p\ne\emptyset$. Now let $b,c\in a^p$. Since $\mathbf L$ is $0$-distributive, we have $b\vee c\in a^\perp$. Together with $b,c\le b\vee c$ and $b,c\in a^p$ we conclude $b=b\vee c=c$ showing $|a^p|=1$, say $a^p=\{d\}$. Since $\mathbf L$ satisfies the {\rm ACC}, $d$ is the greatest element of $(a^\perp,\le)$. Because $a$ was an arbitrary element of $L$, $\mathbf L$ is pseudocomplemented. \\
On the other hand, there exist $0$-distributive complete lattices that are not pseudocomplemented and hence not completely $0$-distributive. This is shown by the following example. Let $\mathbb N$ denote the set of all non-negative integers and put
\[
L:=\{A\mid A\text{ a finite subset of }\mathbb N\}\cup\{\mathbb N\}\text{ and }\mathbf L:=(L,\cup,\cap).
\]
Then $\mathbf L$ is a complete lattice which is obviously $0$-distributive. But only the smallest and the greatest element of $L$ have a pseudocomplement and hence $\mathbf L$ is not pseudocomplemented. This is due to the fact that this lattice does not satisfy the {\rm ACC}.
\end{remark}

\section{Glivenko equivalence}

It was shown already by V.~I.~Glivenko \cite G that in a pseudocomplemented distributive lattice $\mathbf L=(L,\vee,\wedge,0)$ with $0$ the binary relation $\Theta$ given by
\[
(x,y)\in\Theta\text{ if }x^*=y^*
\]
($x,y\in L$) is a congruence on $\mathbf L$ and the quotient lattice $\mathbf L/\Theta$ is a Boolean algebra. We are going to modify this result for posets, semilattices and lattices with $0$ satisfying the ACC.

On a poset $(P,\le,0)$ with $0$ satisfying the ACC we define a binary relation $\Theta_p$ as follows:
\[
x\mathrel{\Theta_p}y\text{ if }x^p=y^p.
\]
($x,y\in P$). This is further equivalent to $x^\perp=y^\perp$ according to (x) of Corollary~\ref{cor1}. The relation $\Theta_p$ is an equivalence relation on $P$, the so-called {\em Glivenko equivalence} which is a certain generalization of the classical Glivenko equivalence, see e.g.\ \cite G.

\begin{lemma}\label{lem3}
Let $\mathbf S=(S,\wedge,0)$ be a meet-semilattice with $0$ satisfying the {\rm ACC}. Then $\Theta_p$ is compatible with $\wedge$ and hence it is a congruence on $\mathbf S$.
\end{lemma}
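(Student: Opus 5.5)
Since $\Theta_p$ is already known to be an equivalence relation, it only remains to check compatibility with $\wedge$. It suffices to show that $a\mathrel{\Theta_p}b$ implies $(a\wedge c)\mathrel{\Theta_p}(b\wedge c)$ for all $a,b,c\in S$. Indeed, if $a\mathrel{\Theta_p}b$ and $c\mathrel{\Theta_p}d$, then applying this twice, together with commutativity of $\wedge$ and transitivity, gives $(a\wedge c)\mathrel{\Theta_p}(b\wedge c)\mathrel{\Theta_p}(b\wedge d)$. By (vii) of Proposition~\ref{prop2}, $x\mathrel{\Theta_p}y$ is equivalent to $x^\perp=y^\perp$. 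So I will work with the orthogonal sets $^\perp$ rather than with $^p$, and the task becomes: $a^\perp=b^\perp$ implies $(a\wedge c)^\perp=(b\wedge c)^\perp$.

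The key observation is the identity
\[
x\in(a\wedge c)^\perp\iff x\wedge(a\wedge c)=0\iff(x\wedge c)\wedge a=0\iff x\wedge c\in a^\perp,
\]
which uses only associativity and commutativity of the semilattice operation. Now assume $a^\perp=b^\perp$ and let $x\in(a\wedge c)^\perp$. Then $x\wedge c\in a^\perp=b^\perp$, hence $(x\wedge c)\wedge b=0$, that is, $x\in(b\wedge c)^\perp$. This gives $(a\wedge c)^\perp\subseteq(b\wedge c)^\perp$, and the reverse inclusion follows by symmetry. Applying (vii) of Proposition~\ref{prop2} once more yields $(a\wedge c)^p=(b\wedge c)^p$.

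There is no real obstacle here. The only point to watch is the translation between $^p$ and $^\perp$ via (vii) of Proposition~\ref{prop2}. This is where the ACC enters: without it, $x^p$ could be empty and would not determine $x^\perp$. With that translation in hand, the argument is purely equational.
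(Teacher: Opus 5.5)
Your proof is correct and follows the paper's argument. You translate $\Theta_p$ into equality of the orthogonal sets via Proposition~\ref{prop2}(vii), then use $x\in(a\wedge c)^\perp\iff x\wedge c\in a^\perp$ to pass from $a^\perp=b^\perp$ to $(a\wedge c)^\perp=(b\wedge c)^\perp$; your explicit reduction of two-sided compatibility to compatibility in one argument is a small step the paper leaves implicit.
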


\begin{proof}
Assume $(a,b)\in\Theta_p$ and $c,d\in S$. Then $a^\perp=b^\perp$ and the following are equivalent:
\[
d\in(a\wedge c)^\perp,\enspace a\wedge c\wedge d=0,\enspace c\wedge d\in a^\perp,\enspace c\wedge d\in b^\perp,\enspace b\wedge c\wedge d=0,\enspace d\in(b\wedge c)^\perp.
\]
This shows $(a\wedge c)^\perp=(b\wedge c)^\perp$, i.e. $(a\wedge c,b\wedge c)\in\Theta_p$.
\end{proof}

\begin{remark}
There are lattices $\mathbf L=(L,\vee,\wedge,0)$ with $0$ satisfying the {\rm ACC} where $\Theta_p$ is not a congruence on $\mathbf L$, see Fig.~3:

\vspace*{-3mm}

\begin{center}
\setlength{\unitlength}{7mm}
\begin{picture}(6,8)
\put(3,1){\circle*{.3}}
\put(1,3){\circle*{.3}}
\put(3,3){\circle*{.3}}
\put(5,3){\circle*{.3}}
\put(1,5){\circle*{.3}}
\put(3,5){\circle*{.3}}
\put(3,7){\circle*{.3}}
\put(3,1){\line(-1,1)2}
\put(3,1){\line(0,1)6}
\put(3,1){\line(1,1)2}
\put(3,7){\line(-1,-1)2}
\put(3,7){\line(1,-2)2}
\put(1,5){\line(0,-1)2}
\put(3,5){\line(-1,-1)2}
\put(2.85,.3){$0$}
\put(.35,2.85){$a$}
\put(3.4,2.85){$b$}
\put(5.4,2.85){$c$}
\put(.35,4.85){$d$}
\put(3.4,4.85){$e$}
\put(2.85,7.4){$1$}
\put(-6,-.75){{\rm Figure~3. Lattice where the Glivenko equivalence is not a congruence}}
\end{picture}
\end{center}

\vspace*{4mm}

The operator $^p$ is given by the following table:
\[
\begin{array}{l|c|c|c|c|c|c|c}
x   & 0 &    a    & b &    c    &    d    & e & 1 \\
\hline
x^p & 1 & \{b,c\} & c & \{d,e\} & \{b,c\} & c & 0
\end{array}
\]	
We have $(a,d)\in\Theta_p$ since $a^p=\{b,c\}=d^p$, but $(a\vee b,d\vee b)\notin\Theta_p$ since $(a\vee b)^p=e^p=c\ne0=1^p=(d\vee b)^p$.
\end{remark}

In the next theorem we characterize those lattices with $0$ satisfying the ACC for which the Glivenko equivalence is a congruence.

\begin{theorem}
Let $\mathbf L=(L,\vee,\wedge,0)$ be a lattice with $0$ satisfying the {\rm ACC}. Then $\Theta_p$ is a congruence on $\mathbf L$ if and only if for arbitrary $a,b,c,d\in L$ the following two conditions together imply $(b\vee c)\wedge d=0$:
\begin{enumerate}[{\rm(i)}]
\item for all $x\in L$, $a\wedge x=0$ is equivalent to $b\wedge x=0$,
\item $(a\vee c)\wedge d=0$.
\end{enumerate}
\end{theorem}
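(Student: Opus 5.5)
We reduce the statement to compatibility of $\Theta_p$ with $\vee$. Recall from (vi) of Corollary~\ref{cor1} that $(x,y)\in\Theta_p$ if and only if $x^\perp=y^\perp$. Condition (i) says exactly that $a^\perp=b^\perp$, i.e.\ $(a,b)\in\Theta_p$. By Lemma~\ref{lem3}, $\Theta_p$ is an equivalence relation compatible with $\wedge$. Hence $\Theta_p$ is a congruence on $\mathbf L$ if and only if $(a,b)\in\Theta_p$ implies $(a\vee c,b\vee c)\in\Theta_p$ for all $c\in L$. The general two-variable compatibility $(a,b),(c,e)\in\Theta_p\Rightarrow(a\vee c,b\vee e)\in\Theta_p$ then follows by transitivity via $a\vee c\mathrel{\Theta_p}b\vee c\mathrel{\Theta_p}b\vee e$.

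For necessity, assume $\Theta_p$ is a congruence and let $a,b,c,d$ satisfy (i) and (ii). By (i) we have $(a,b)\in\Theta_p$, so $(a\vee c,b\vee c)\in\Theta_p$, i.e.\ $(a\vee c)^\perp=(b\vee c)^\perp$. By (ii), $d\in(a\vee c)^\perp$, hence $d\in(b\vee c)^\perp$, which is $(b\vee c)\wedge d=0$.

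For sufficiency, assume the implication holds and let $(a,b)\in\Theta_p$ and $c\in L$. Take $d\in(a\vee c)^\perp$. Then (i) and (ii) hold for $a,b,c,d$, so $(b\vee c)\wedge d=0$, giving $(a\vee c)^\perp\subseteq(b\vee c)^\perp$. Since (i) is symmetric in $a$ and $b$, applying the hypothesis to the quadruple $b,a,c,d$ gives the reverse inclusion. Thus $(a\vee c)^\perp=(b\vee c)^\perp$, i.e.\ $(a\vee c,b\vee c)\in\Theta_p$. Together with Lemma~\ref{lem3}, this shows $\Theta_p$ is a congruence.

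The only step needing care is the reduction to one-sided compatibility, which uses transitivity and the symmetry of (i). Everything else is unwinding the definitions through $x^p=y^p\Leftrightarrow x^\perp=y^\perp$.
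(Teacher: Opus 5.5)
Your proposal is correct and follows essentially the same route as the paper. You rewrite (i) as $a^\perp=b^\perp$, derive $(a\vee c)^\perp\subseteq(b\vee c)^\perp$ from the hypothesis, get equality by symmetry, and invoke Lemma~\ref{lem3} for compatibility with $\wedge$. Your explicit necessity argument, the transitivity step, and the correct citation of (vi) of Corollary~\ref{cor1} only spell out what the paper leaves implicit.
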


\begin{proof}
It is evident that if $\Theta_p$ is a congruence on $\mathbf L$ then the condition is satisfied. We prove that also the converse holds. The implication says: For arbitrary $a,b,c,d\in L$, $a^\perp=b^\perp$ and $d\in(a\vee c)^\perp$ together imply $d\in(b\vee c)^\perp$. This means that $(a,b)\in\Theta_p$ implies $(a\vee c)^\perp\subseteq(b\vee c)^\perp$. Because of the symmetry of $\Theta_p$ this says that $(a,b)\in\Theta_p$ implies $(a\vee c)^\perp=(b\vee c)^\perp$, i.e.\ $(a\vee c,b\vee c)\in\Theta_p$. Lemma~\ref{lem3} completes the proof of the theorem.
\end{proof}

The next example shows that also in the case when $\Theta_p$ is a congruence on $\mathbf L$, the quotient lattice $\mathbf L/\Theta_p$ need neither be complemented nor modular.

\begin{example}
Consider the lattice $\mathbf L=(L,\vee,\wedge)$ depicted in Fig.~4:

\vspace*{-3mm}

\begin{center}
\setlength{\unitlength}{7mm}
\begin{picture}(6,8)
\put(3,1){\circle*{.3}}
\put(1,3){\circle*{.3}}
\put(3,3){\circle*{.3}}
\put(5,3){\circle*{.3}}
\put(1,5){\circle*{.3}}
\put(5,5){\circle*{.3}}
\put(3,7){\circle*{.3}}
\put(3,1){\line(-1,1)2}
\put(3,1){\line(0,1)2}
\put(3,1){\line(1,1)2}
\put(3,7){\line(-1,-1)2}
\put(3,7){\line(1,-1)2}
\put(3,3){\line(-1,1)2}
\put(3,3){\line(1,1)2}
\put(1,3){\line(0,1)2}
\put(5,3){\line(0,1)2}
\put(2.85,.3){$0$}
\put(.35,2.85){$a$}
\put(3.4,2.85){$b$}
\put(5.4,2.85){$c$}
\put(.35,4.85){$d$}
\put(5.4,4.85){$e$}
\put(2.85,7.4){$1$}
\put(-4.2,-.75){{\rm Figure~4. Not pseudocomplemented non-modular lattice}}
\end{picture}
\end{center}

\vspace*{4mm}

$\mathbf L$ is not pseudocomplemented since $b$ has no pseudocomplement, $\mathbf L$ is not complemented since $b$ has no complement and $\mathbf L$ is not modular since the sublattice $(\{0,a,c,d,1\},\vee,\wedge)$ of $\mathbf L$ is isomorphic to $\mathbf N_5$. The operator $^p$ is given by the following table:
\[
\begin{array}{l|c|c|c|c|c|c|c}
x   & 0 & a &    b    & c & d & e & 1 \\
\hline
x^p & 1 & e & \{a,c\} & d & c & a & 0
\end{array}
\]	
This shows that $\Theta_p=\Delta\in\Con\mathbf L$ and $\mathbf L/\Theta_p\cong\mathbf L$. Hence $\mathbf L/\Theta_p$ is neither complemented nor modular.
\end{example}

Finally, we characterize those bounded lattices $\mathbf L$ satisfying the ACC for which the quotient lattice $\mathbf L/\Theta_p$ is complemented.

\begin{theorem}\label{th1}
Let $\mathbf L=(L,\vee,\wedge,0,1)$ be a bounded lattice satisfying the {\rm ACC} such that $\Theta_p\in\Con\mathbf L$. Then the following are equivalent:
\begin{enumerate}[{\rm(i)}]
\item $\mathbf L/\Theta_p$ is complemented,
\item to every $x\in L$ there exists some $y\in x^p$ satisfying $(x\vee y)^p=0$.
\end{enumerate}
\end{theorem}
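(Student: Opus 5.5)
We first describe the bounds of $\mathbf L/\Theta_p$ and then translate complementation there into a statement about $^p$. The class $[0]$ consists of all $x$ with $x^\perp=0^\perp=L$, i.e.\ $x\wedge x=0$, so $[0]=\{0\}$. The class $[1]$ consists of all $x$ with $x^p=1^p=0$, equivalently $x^\perp=\{0\}$. Since $\Theta_p\in\Con\mathbf L$, the quotient is bounded with bottom $[0]=\{0\}$ and top $[1]$. Hence $[x]$ has a complement $[y]$ in $\mathbf L/\Theta_p$ if and only if $x\wedge y=0$ and $(x\vee y)^p=0$. Here we use that $[x]\wedge[y]=[x\wedge y]=[0]$ forces $x\wedge y=0$ because $[0]=\{0\}$, and that $[x\vee y]=[1]$ means $(x\vee y)^p=1^p=0$.

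(ii)$\Rightarrow$(i) is then immediate. Given $x$, choose $y\in x^p$ with $(x\vee y)^p=0$. Since $y\in x^p\subseteq x^\perp$, we have $x\wedge y=0$, so $[y]$ is a complement of $[x]$.

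(i)$\Rightarrow$(ii) is the main step. Let $[z]$ be a complement of $[x]$, so $x\wedge z=0$ and $(x\vee z)^p=0$. Then $z\in x^\perp$, and by the ACC (see Proposition~\ref{prop2}) there is some $y\in x^p=\Max(x^\perp)$ with $z\le y$. Then $x\vee z\le x\vee y$, and by the Remark after Corollary~\ref{cor1} the set $\{u\in L\mid u^p=0\}$ is an order filter. Hence $(x\vee y)^p=0$, which is (ii). One can phrase the same step directly: $(x\vee y)^\perp\subseteq(x\vee z)^\perp=\{0\}$.

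The only delicate point is the identification of the bounds of the quotient, in particular that $[0]=\{0\}$ and that $u\in[1]$ exactly when $u^p=0$. This relies on Corollary~\ref{cor1}, which shows that $u^p=v^p$ is equivalent to $u^\perp=v^\perp$. With the bounds in place, the replacement of an arbitrary complement $z\in x^\perp$ by a maximal one $y\in x^p$ uses only the ACC and the upward closure of $\{u\mid u^p=0\}$.
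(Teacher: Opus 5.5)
Your proof is correct and follows essentially the same route as the paper. You identify $[0]$ and $[1]$ in $\mathbf L/\Theta_p$ and reduce complementation of $[x]$ to the existence of some $z\in x^\perp$ with $(x\vee z)^p=0$. You then pass to a maximal $y\in x^p$ above $z$ using the ACC and the upward closure of $\{u\mid u^p=0\}$. The differences are cosmetic: you get $[0]=\{0\}$ directly from $x^\perp=L\Rightarrow x\wedge x=0$ rather than via $(a\wedge b)^p=1\Leftrightarrow(a\wedge b)^{pp}=0\Leftrightarrow a\wedge b=0$, and you make explicit the ACC step that the paper leaves implicit in its last equivalence.
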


\begin{proof}
Let $a,b\in L$. Then $[a]\Theta_p\vee[b]\Theta_p=[1]\Theta_p$ if and only if $(a\vee b)^p=0$. Moreover, the following are equivalent:
\[
[a]\Theta_p\wedge[b]\Theta_p=[0]\Theta_p,\enspace(a\wedge b)^p=1,\enspace(a\wedge b)^{pp}=0,\enspace a\wedge b=0,\enspace b\in a^\perp.
\]
Observe that $a\le b$ together with $a^p=0$ implies $b^p\le_1a^p=0$, i.e.\ $b^p=0$. Now the following statements are equivalent:
\begin{align*}
& {\rm(i)}, \\
& \text{to every }x\in L\text{ there exists some }z\in L\text{ satisfying } [x]\Theta_p\vee[z]\Theta_p=[1]\Theta_p\text{ and} \\
& \hspace*{1cm}[x]\Theta_p\wedge[z]\Theta_p=[0]\Theta_p, \\
& \text{to every }x\in L\text{ there exists some }z\in L\text{ satisfying }(x\vee z)^p=0\text{ and }z\in x^\perp, \\
& \text{to every }x\in L\text{ there exists some }z\in x^\perp\text{ satisfying }(x\vee z)^p=0, \\
& {\rm(ii)}.
\end{align*}
\end{proof}

\begin{example}
\begin{enumerate}[{\rm(i)}]
\item Since for the lattice from Figure~4 we have
\begin{align*}
        b^p & =\{a,c\}, \\
(b\vee a)^p & =d^p=c\ne0, \\
(b\vee c)^p & =e^p=a\ne0,
\end{align*}
$\mathbf L/\Theta_p\cong\mathbf L$ is not complemented according to Theorem~\ref{th1}.
\item Consider the not complemented lattice $\mathbf L$ visualized in Fig.~5:

\vspace*{-3mm}

\begin{center}
\setlength{\unitlength}{7mm}
\begin{picture}(6,10)
\put(3,1){\circle*{.3}}
\put(1,3){\circle*{.3}}
\put(3,3){\circle*{.3}}
\put(5,3){\circle*{.3}}
\put(3,5){\circle*{.3}}
\put(3,7){\circle*{.3}}
\put(3,9){\circle*{.3}}
\put(3,1){\line(-1,1)2}
\put(3,1){\line(0,1)8}
\put(3,1){\line(1,1)2}
\put(1,3){\line(1,1)2}
\put(5,3){\line(-1,2)2}
\put(2.85,.3){$0$}
\put(.35,2.85){$a$}
\put(3.4,2.85){$b$}
\put(5.4,2.85){$c$}
\put(3.4,4.85){$d$}
\put(3.4,6.85){$e$}
\put(2.85,9.4){$1$}
\put(-5,-.75){{\rm Figure~5. Not pseudocomplementerd not complemented lattice}}
\end{picture}
\end{center}

\vspace*{4mm}

The operator $^p$ is given by the following table:
\[
\begin{array}{l|c|c|c|c|c|c|c}
x   & 0 &    a    &    b    & c & d & e & 1 \\
\hline
x^p & 1 & \{b,c\} & \{a,c\} & d & c & 0 & 0
\end{array}
\]
Hence $\Theta_p=\Delta\cup\{e,1\}^2\in\Con\mathbf L$. However, the lattice $\mathbf L/\Theta_p$ depicted in Fig.~6 is complemented:

\vspace*{-3mm}

\begin{center}
\setlength{\unitlength}{7mm}
\begin{picture}(6,8)
\put(3,1){\circle*{.3}}
\put(1,3){\circle*{.3}}
\put(3,3){\circle*{.3}}
\put(5,3){\circle*{.3}}
\put(3,5){\circle*{.3}}
\put(3,7){\circle*{.3}}
\put(3,1){\line(-1,1)2}
\put(3,1){\line(0,1)6}
\put(3,1){\line(1,1)2}
\put(1,3){\line(1,1)2}
\put(5,3){\line(-1,2)2}
\put(2.7,.3){$[0]$}
\put(.1,2.85){$[a]$}
\put(3.25,2.85){$[b]$}
\put(5.25,2.85){$[c]$}
\put(3.25,4.85){$[d]$}
\put(2.65,7.4){$[1]$}
\put(-.6,-.75){{\rm Figure~6. Factor lattice $\mathbf L/\Theta_p$}}
\end{picture}
\end{center}
\end{enumerate}
\end{example}

\vspace*{4mm}

In the next theorem we characterize lattices $\mathbf L$ for which the quotient lattice $\mathbf L/\Theta_p$ is modular.

\begin{theorem}
Let $\mathbf L=(L,\vee,\wedge,0)$ be a lattice with $0$ satisfying the {\rm ACC} such that $\Theta_p\in\Con\mathbf L$. Then the following are equivalent:
\begin{enumerate}[{\rm(i)}]
\item $\mathbf L/\Theta_p$ is modular,
\item for all $x,y,z\in L$, $(x\vee z)^p=z^p$ implies $\big((x\vee y)\wedge z\big)^p=\big(x\vee(y\wedge z)\big)^p$.
\end{enumerate}
\end{theorem}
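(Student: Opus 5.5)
Since $\Theta_p\in\Con\mathbf L$, the quotient $\mathbf L/\Theta_p$ is a lattice in which $[a]\Theta_p\le[b]\Theta_p$ holds if and only if $[a\vee b]\Theta_p=[b]\Theta_p$, i.e.\ $(a\vee b)^p=b^p$. Moreover, two classes coincide, $[u]\Theta_p=[v]\Theta_p$, exactly when $u^p=v^p$. The plan is to translate the modular law of the quotient verbatim into statements about $^p$ using these two facts.

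Recall that a lattice is modular iff for all elements $X,Y,Z$ with $X\le Z$ we have $(X\vee Y)\wedge Z=X\vee(Y\wedge Z)$. Put $X=[x]\Theta_p$, $Y=[y]\Theta_p$, $Z=[z]\Theta_p$; every triple of classes arises this way for suitable $x,y,z\in L$. Because $\Theta_p$ is a congruence, the quotient operations satisfy
\[
(X\vee Y)\wedge Z=[(x\vee y)\wedge z]\Theta_p,\qquad X\vee(Y\wedge Z)=[x\vee(y\wedge z)]\Theta_p.
\]
Hence the modular implication for $X,Y,Z$ reads: if $(x\vee z)^p=z^p$ then $\big((x\vee y)\wedge z\big)^p=\big(x\vee(y\wedge z)\big)^p$. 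That is precisely condition (ii) for the representatives $x,y,z$.

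For (i)$\Rightarrow$(ii), take arbitrary $x,y,z\in L$ with $(x\vee z)^p=z^p$. Then $X\le Z$ in $\mathbf L/\Theta_p$, so modularity gives equality of the two classes above, which is the desired equality of $^p$-values. For (ii)$\Rightarrow$(i), take arbitrary classes $X\le Z$ and $Y$ and choose representatives. The hypothesis $X\le Z$ means $[x\vee z]\Theta_p=[z]\Theta_p$, i.e.\ $(x\vee z)^p=z^p$. Then (ii) yields the equality of the two classes, so the modular law holds in the quotient.

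The only point needing care is that (ii) quantifies over elements rather than classes. This causes no problem, because both the hypothesis and the conclusion of (ii) depend only on the $\Theta_p$-classes of $x,y,z$, since $\Theta_p$ is a congruence. I therefore expect no genuine obstacle; the proof is a direct unwinding of definitions.
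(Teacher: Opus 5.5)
Your proposal is correct and follows essentially the same route as the paper's proof. Both are a direct chain of equivalences that rewrites the modular law of $\mathbf L/\Theta_p$ via $[x]\Theta_p\le[z]\Theta_p$ if and only if $(x\vee z)^p=z^p$, the compatibility of $\Theta_p$ with $\vee$ and $\wedge$, and the fact that two classes coincide exactly when the $^p$-values of their representatives agree.
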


\begin{proof}
The following are equivalent:
\begin{align*}
& {\rm(i)}, \\
& \text{for all }x,y,z\in L,[x]\Theta_p\le[z]\Theta_p\text{ implies} \\
& \hspace*{1cm} ([x]\Theta_p\vee[y]\Theta_p)\wedge[z]\Theta_p=[x]\Theta_p\vee([y]\Theta_p)\wedge[z]\Theta_p), \\
& \text{for all }x,y,z\in L,[x]\Theta_p\vee[z]\Theta_p=[z]\Theta_p\text{ implies }[(x\vee y)\wedge z]\Theta_p=[x\vee(y\wedge z)]\Theta_p, \\
& \text{for all }x,y,z\in L,[x\vee z]\Theta_p=[z]\Theta_p\text{ implies }\big((x\vee y)\wedge z\big)^p=\big(x\vee(y\wedge z)\big)^p, \\
& {\rm(ii)}.
\end{align*}
\end{proof}

\begin{example}
Consider the non-modular lattice $\mathbf L$ visualized in Fig.~7:
		
\vspace*{-3mm}
		
\begin{center}
\setlength{\unitlength}{7mm}
\begin{picture}(6,12)
\put(3,1){\circle*{.3}}
\put(1,3){\circle*{.3}}
\put(3,3){\circle*{.3}}
\put(5,3){\circle*{.3}}
\put(1,5){\circle*{.3}}
\put(1,7){\circle*{.3}}
\put(3,7){\circle*{.3}}
\put(5,7){\circle*{.3}}
\put(3,9){\circle*{.3}}
\put(3,11){\circle*{.3}}
\put(3,1){\line(-1,1)2}
\put(3,1){\line(0,1){10}}
\put(3,1){\line(1,1)2}
\put(1,3){\line(0,1)4}
\put(5,3){\line(0,1)4}
\put(3,9){\line(-1,-1)2}
\put(3,9){\line(1,-1)2}
\put(2.85,.3){$0$}
\put(.35,2.85){$a$}
\put(3.4,2.85){$b$}
\put(5.4,2.85){$c$}
\put(.35,4.85){$d$}
\put(.35,6.85){$e$}
\put(3.4,6.85){$f$}
\put(5.4,6.85){$g$}
\put(3.4,8.85){$h$}
\put(2.85,11.4){$1$}
\put(-4.2,-.75){{\rm Figure~7. Not pseudocomplemented non-modular lattice}}
\end{picture}
\end{center}

\vspace*{4mm}
		
The operator $^p$ is given by the following table:
\[
\begin{array}{l|c|c|c|c|c|c|c|c|c|c}
x   & 0 &    a    &    b    &    c    &    d    &    e    &    f    &    g    & h & 1 \\
\hline
x^p & 1 & \{f,g\} & \{e,g\} & \{e,f\} & \{f,g\} & \{f,g\} & \{e,g\} & \{e,f\} & 0 & 0
\end{array}
\]
Hence
\[
\Theta_p=\{0\}^2\cup\{a,d,e\}^2\cup\{b,f\}^2\cup\{c,g\}^2\cup\{h,1\}^2\in\Con\mathbf L.
\]
However, the lattice $\mathbf L/\Theta_p$ depicted in Fig.~8 is modular:
		
\vspace*{-3mm}
		
\begin{center}
\setlength{\unitlength}{7mm}
\begin{picture}(6,6)
\put(3,1){\circle*{.3}}
\put(1,3){\circle*{.3}}
\put(3,3){\circle*{.3}}
\put(5,3){\circle*{.3}}
\put(3,5){\circle*{.3}}
\put(3,1){\line(-1,1)2}
\put(3,1){\line(0,1)4}
\put(3,1){\line(1,1)2}
\put(3,5){\line(-1,-1)2}
\put(3,5){\line(1,-1)2}
\put(2.7,.3){$[0]$}
\put(.1,2.85){$[a]$}
\put(3.25,2.85){$[b]$}
\put(5.25,2.85){$[c]$}
\put(2.65,5.4){$[1]$}
\put(-.6,-.75){{\rm Figure~8. Factor lattice $\mathbf L/\Theta_p$}}
\end{picture}
\end{center}
\end{example}

Authors' addresses:

Ivan Chajda \\
Palack\'y University Olomouc \\
Faculty of Science \\
Department of Algebra and Geometry \\
17.\ listopadu 12 \\
771 46 Olomouc \\
Czech Republic \\
ivan.chajda@upol.cz

Helmut L\"anger \\
TU Wien \\
Faculty of Mathematics and Geoinformation \\
Institute of Discrete Mathematics and Geometry \\
Wiedner Hauptstra\ss e 8-10 \\
1040 Vienna \\
Austria, and \\
Palack\'y University Olomouc \\
Faculty of Science \\
Department of Algebra and Geometry \\
17.\ listopadu 12 \\
771 46 Olomouc \\
Czech Republic \\
helmut.laenger@tuwien.ac.at
\end{document}